\newtheorem{theorem}{Theorem}
\newtheorem{lemma}{Lemma}
\newtheorem{conjecture}{Conjecture}
\newtheorem{example}{Example}
\theoremstyle{remark}
\newtheorem{remark}[theorem]{\bf Remark}
\begin{document}

\setcounter{page}{1}

\title[]{Gaussian inequality}

\author{Tewodros~Amdeberhan  and David~Callan}

\address{Department of Mathematics,
Tulane University, New Orleans, LA 70118, USA}
\email{tamdeber@tulane.edu}

\address{Department of Statistics, 
University of Wisconsin-Madison, Madison, WI 53715, USA}
\email{callan@stat.wisc.edu}

\subjclass[2010]{11B65, 11A07.}

\date{\today}

\begin{abstract} We prove some special cases of Bergeron's inequality involving two Gaussian polynomials (or $q$-binomials).
\end{abstract}

\maketitle

\section{Introduction}

\bigskip
\noindent
We begin by recalling the $q$-analogues $[n]!_q=\prod_{j=1}^n\frac{1-q^j}{1-q}$ of factorials and the $q$-analogue
$\binom{n}k_q=\frac{[n]!_q}{[k]!_q[n-k]!_q}$ of binomial coefficients. Adopt the convention $[0]!_q=1$. It is well-known that these rational functions $\binom{n}k_q$ are polynomials, in $q$, also called \emph{Gaussian polynomials}, having non-negative coefficients which are also \emph{unimodal} and symmetric. Furthermore, there are several combinatorial interpretations of which we state two of them.

\bigskip
\noindent
A \emph{word} of length $n$ over the \emph{alphabet} set $\{0,1\}$ is a finite sequence $w=a_1\cdots a_n$. Construct 
$$\mathcal{W}_{n,k}=\{w=a_1\cdots a_n: \text{$w$ has $k$ zeros and $n-k$ ones}\}$$
and the \emph{inversion set} of $w$ as $\text{Inv}(w)=\{(i,j): \text{$i<j$ and $a_i>a_j$}\}$.
The corresponding \emph{inversion number} of $w$ will be denoted $\text{inv}(w)=\#\text{Inv}(w)$. Then, we have
$$\binom{n}k_q=\sum_{w\in\mathcal{W}_{n,k}}q^{\text{inv}(w)}.$$

\noindent
Yet, another formulation which would come to appeal to many combinatorialists is
$$\binom{a+d}a_q=\sum_{T}q^{area(T)}$$
where $T$ is a lattice path inside an $a\times d$ box and $area(T)$ is area above the curve $T$. 

\bigskip
\noindent
Given two polynomials $f(q)$ and $g(q)$, we write $f(q)\geq g(q)$ provided that $f(q)-g(q)$ has non-negative coefficients in the powers of $q$. 

\smallskip
\noindent
The well-known \emph{Foulkes conjecture} (see, for instance \cite{B2}) was generalized by Vessenes \cite{V}. She conjectured that 
\begin{align}
(h_b\circ h_c) - (h_a \circ h_d)
\end{align}
is \emph{Schur positive} (expands with positive integer coefficients in the Schur basis $\{s_{\mu}\}_{\mu\vdash n}$ of symmetric polynomials) whenever $a\leq b<c \leq d$, with $n=ad=bc$, and one writes $(h_n\circ h_k)$ for the \emph{plethysm} of complete homogeneous symmetric functions. A well-known fact is that $(h_n\circ h_k)(1,q)=\binom{n+k}{k}_q$. Moreover, any non-zero evaluation of a Schur function at $1$ and $q$ is of the form $q^i+q^{i+1}+\dots + q^j$ for some $i<j$. Exploiting these facts on the occasion of \cite{B}, and assuming that (1) holds, F. Bergeron (see also [4]) underlined that the evaluation of the difference in (1), at $1$ and $q$, would imply the following: 

\noindent
\begin{conjecture} Assume $0<a\leq b<c\leq d$ are positive integers with $ad=bc$. Then, the following difference of two Gaussian polynomials is symmetric and satisfies
\begin{align} \binom{b+c}b_q-\binom{a+d}a_q\geq0. \end{align}
\end{conjecture}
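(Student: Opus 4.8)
The plan is to treat the two assertions separately. Symmetry is immediate from the interpretations already recalled: in the lattice‑path model the maximal area inside an $a\times d$ box is $ad$ and inside a $b\times c$ box is $bc$, so $\binom{b+c}{b}_q$ and $\binom{a+d}{a}_q$ both have degree $N:=bc=ad$, and each is palindromic about $N/2$. Hence their difference is symmetric about $N/2$ no matter what the signs of its coefficients are, and the entire content of (2) is the coefficientwise bound. To attack that bound I would pass to the partition model: the coefficient of $q^i$ in $\binom{b+c}{b}_q$ is the number $P(i;b,c)$ of partitions of $i$ whose Young diagram fits in a $b\times c$ rectangle, and the coefficient of $q^i$ in $\binom{a+d}{a}_q$ is $P(i;a,d)$. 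Since both rectangles have area $N$, inequality (2) says exactly that the more nearly square rectangle holds at least as many sub‑diagrams of each size, $P(i;b,c)\ge P(i;a,d)$ for all $i$. (The hypotheses give $a\le b<c\le d$ with $a/b=c/d$, so the $a\times d$ box is the more elongated of the two.)

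The natural way to prove this is to build, for each $i$, an area‑preserving injection from diagrams inside $a\times d$ into diagrams inside $b\times c$. Because $a\le b$ one has extra rows to work with, and the task is to redistribute the (possibly long, up to $d$) rows into rows of length at most $c$ while keeping the result weakly decreasing and within $b$ rows; a row‑splitting or ``wrapping'' procedure is the obvious candidate map. The base case $a=1$, however, needs no injection. There $d=bc$ and $\binom{a+d}{a}_q=\binom{1+bc}{1}_q=1+q+\cdots+q^{bc}$ has every coefficient equal to $1$, while the lattice‑path model shows that $\binom{b+c}{b}_q$ has a path of each area $0,1,\dots,bc$, so its support is exactly $\{0,\dots,bc\}$ with all coefficients at least $1$; subtracting gives a coefficientwise non‑negative difference, proving (2) when $a=1$.

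For the next layer of cases I would either peel one row or column at a time using the $q$‑Pascal recurrence $\binom{m+k}{k}_q=\binom{m+k-1}{k}_q+q^{m}\binom{m+k-1}{k-1}_q$ to reduce a comparison of two boxes to comparisons of smaller boxes, or, for small $a$, compare the explicit counts directly. For instance, when $a=2$ one has $P(i;2,d)=\min\bigl(\lfloor i/2\rfloor,\lfloor(2d-i)/2\rfloor\bigr)+1$ for $0\le i\le 2d$, a concrete staircase that can be bounded below by $P(i;b,c)$; similar closed forms are available for $a=3$. These give a family of special cases entirely consistent with the abstract's claim.

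The hard part will be the general area‑preserving injection. Naive row‑splitting need not return a weakly decreasing shape and can overflow the $b$‑row bound, and the fact that each polynomial is separately symmetric and unimodal never by itself implies coefficientwise domination of one by the other. I therefore expect the realistic target to be exactly a family of special cases — small $a$, or parameter patterns in which $a$ divides $b,c,d$ conveniently — obtained by the recurrence‑peeling above, with the full conjecture resting on either a correct universal injection or an O'Hara‑type chain decomposition of both Gaussian polynomials whose pieces can be matched across the two rectangles, and left open in the general case.
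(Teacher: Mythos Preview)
The statement you are addressing is presented in the paper as an open \emph{conjecture}, not a theorem; the paper does not prove it in general, and neither does your proposal. Your write-up is honest about this: the symmetry claim is dispatched correctly (and in the same one-line way the paper does, noting both $q$-binomials are palindromic of the same degree $ad=bc$), the case $a=1$ is settled cleanly, and you explicitly leave the general coefficientwise inequality open, flagging that a naive row-splitting injection can fail to be weakly decreasing or can overflow the row bound. So there is no error, but there is also no proof beyond $a=1$ and a sketch for $a=2,3$ (cases already handled by Zanello, as the paper notes).

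Where your approach and the paper's diverge is in the \emph{slice} of the conjecture attacked and in the method. You slice by fixing $a$ small and propose a combinatorial injection on partitions in rectangles. The paper instead fixes the ratio: it introduces the $\beta$-Conjecture ($c=\beta a$, $d=\beta b$) and proves the case $\beta=2$, i.e.\ $\binom{3a+i}{a+i}_q\ge\binom{3a+2i}{a}_q$ for all $a$ and all $i\ge1$. The method is entirely algebraic: both sides are expanded in the common basis $\binom{3a+i}{a-k}_q$ via the $q$-Vandermonde--Chu identity (for the smaller side) and a Wilf--Zeilberger certificate (for the larger side), and the resulting coefficient difference $\binom{i+k}{2k}_q+q^{a+i}\binom{i+k-1}{2k-1}_q-\binom{i}{k}_q$ is shown nonnegative by an elementary $q$-binomial inequality. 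This buys an infinite family (all $a$, all $i$) in one stroke, at the cost of being tied to the specific arithmetic $c=2a$, $d=2b$; your injection program, if it could be made to work, would in principle cover all $(a,b,c,d)$ at once, but as you acknowledge, the key map is missing.
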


\smallskip
\noindent
One can associate a direct combinatorial meaning to Vessenes' conjecture in the context of representation theory of $GL(V)$. Indeed, if it holds true, it would signify that there is an embedding of the composite of symmetric powers $S^a(S^d(V))$ inside $S^b(S^c(V)) $, as $GL(V)$-modules. It may however be more natural to state that there is a surjective $GL(V)$-module morphism the other way around (which is also equivalent). Therefore each $GL(V)$-irreducible occurs with smaller multiplicity in $S^a(S^d(V))$ than it does in $S^b(S^c(V))$, and the conjecture reflects this at the level of the corresponding characters (with Schur polynomials appearing as characters of irreducible representations). 

\bigskip
\noindent
The sole attempt \cite{Z} toward resolving Conjecture 1 was made by F. Zanello, who attends to the special case $a\leq 3$, including the property of symmetry and unimodality.  A sequence of numbers is \emph{unimodal} if it does not increase strictly after a strict decrease. The author in \cite{Z} offers a strengthening of Conjecture 1 to the effect that

\begin{conjecture} Preserve the hypothesis in Conjecture 1. Then, the coefficients of the symmetric polynomial
\begin{align*} \binom{b+c}b_q-\binom{a+d}a_q \end{align*}
are non-negative and unimodal.
\end{conjecture}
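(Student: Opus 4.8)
The plan is to prove all three assertions---symmetry, non-negativity, and unimodality---in a single stroke by passing to a convenient basis. Write $N=ad=bc$ and record first that $\binom{b+c}b_q$ has degree $b\cdot c=N$ while $\binom{a+d}a_q$ has degree $a\cdot d=N$, and that each is palindromic about $N/2$ (the standard symmetry of Gaussian polynomials). Hence the difference is automatically symmetric about $N/2$, which disposes of the symmetry claim and reduces everything to controlling the coefficients on the lower half $0\le m\le N/2$.

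For the remaining claims I would use the following reformulation of symmetric unimodality. For $0\le i\le \lfloor N/2\rfloor$ set the \emph{atom} $A_i(q)=q^i+q^{i+1}+\cdots+q^{N-i}$; each $A_i$ is itself symmetric and unimodal, and the $\lfloor N/2\rfloor+1$ atoms $A_0,\dots,A_{\lfloor N/2\rfloor}$ form a basis for the palindromic polynomials of degree $N$. If $f(q)=\sum_m c_mq^m$ is symmetric, then expanding $f=\sum_i\lambda_iA_i$ gives $\lambda_i=c_i-c_{i-1}$ for $0\le i\le N/2$ (with $c_{-1}:=0$), so $f$ is simultaneously non-negative and unimodal if and only if every $\lambda_i\ge 0$. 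Applying this to our difference, and writing $p_{n,k}(m)$ for the number of partitions of $m$ fitting in an $n\times k$ box (so $p_{n,k}(m)=[q^m]\binom{n+k}k_q$), both Conjecture 1 and Conjecture 2 follow at once from the single \emph{increment domination}
$$p_{b,c}(m)-p_{b,c}(m-1)\ \ge\ p_{a,d}(m)-p_{a,d}(m-1),\qquad 1\le m\le N/2 .$$

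To attack this, I would give the increments a representation-theoretic model. Under a standard $\mathfrak{sl}_2$-action on the space spanned by the partitions in an $n\times k$ box (with the raising and lowering operators changing the size by one), the Gaussian polynomial is the graded character, and the increment $p_{n,k}(m)-p_{n,k}(m-1)$ for $m\le N/2$ is exactly the multiplicity of the irreducible summand of highest weight $N-2m$; this is the source of unimodality for a single Gaussian polynomial. Decomposing each irreducible into a symmetric chain, the increment also counts the chains whose bottom lies at rank $m$. Thus the target becomes a domination of $\mathfrak{sl}_2$-multiplicities (equivalently, chain-counts) between the two equal-area boxes, and the natural route is a size- and rank-respecting injection from the chains of $L(a,d)$ into those of $L(b,c)$. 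The extreme case $a=1$ is immediate: there $d=bc$, $\binom{1+d}1_q=1+q+\cdots+q^N=A_0$, so the difference equals $\sum_{i\ge 1}\lambda_iA_i$ with each $\lambda_i\ge 0$ by unimodality of $\binom{b+c}b_q$ alone.

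The main obstacle is precisely this chain-count domination in general. Because $d\ge c$, the box $a\times d$ is neither contained in nor contains $b\times c$, so there is no naive inclusion $L(a,d)\hookrightarrow L(b,c)$ and no off-the-shelf injection; the two posets have the same cardinality and the same top rank but genuinely different shapes, and the constraint $ad=bc$ must be used in an essential way. I therefore expect the realistic path to be a case analysis: fix small $a$ (reproving and pushing beyond Zanello's $a\le 3$ via explicit formulas for the low-order increments), or fix a small gap such as $c-b$ or $d-c$, where the partition counts become rational-function-computable and the increment inequality can be checked directly. Establishing the domination uniformly in all parameters---that is, a single combinatorial map comparing boxes of equal area but unequal dimensions---is the crux that keeps the full conjecture open.
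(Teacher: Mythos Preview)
The statement you are addressing is labeled \emph{Conjecture~2} in the paper; the authors do not prove it, nor do they claim to. It is recorded as Zanello's strengthening of Conjecture~1, and the paper's actual results concern only special cases of the weaker Conjecture~1 (specifically the $\beta=2$ instance of Conjecture~3). So there is no ``paper's own proof'' to compare against.

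Your proposal is not a proof either, and you say so yourself. The symmetry argument is fine and matches what the paper observes. Your reduction to the atom basis $A_i$ and the equivalent \emph{increment domination}
\[
p_{b,c}(m)-p_{b,c}(m-1)\ \ge\ p_{a,d}(m)-p_{a,d}(m-1),\qquad 1\le m\le N/2,
\]
is a correct and standard reformulation of ``symmetric, non-negative, and unimodal'' for a palindromic difference; likewise the $\mathfrak{sl}_2$ interpretation of these increments as highest-weight multiplicities is accurate. But from that point on you only sketch a program (chain injections between $L(a,d)$ and $L(b,c)$, case analysis for small $a$ or small gaps) and then explicitly concede that the general chain-count domination ``keeps the full conjecture open.'' That concession is the entire gap: the reduction is lossless, so the increment-domination inequality is exactly as hard as the conjecture itself, and you have supplied no mechanism to establish it beyond the trivial $a=1$ case. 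In short, you have restated the conjecture in an equivalent (and well-known) form, not proved it --- which is consistent with its status in the paper.
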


\noindent
Notice that symmetry is clear, since both $\binom{b+c}b_q$ and $\binom{a+d}a_q$ are symmetric polynomials of the same degree, $ad=bc$.
We started out this project with the goal of  proving the below $3$-parameter special case of Conjecture 1 which we dubbed the \emph{$\beta$-Conjecture}. Namely,

\begin{conjecture} For integers $0<a< b$ and $\beta\geq1$, we have
\begin{align}\binom{b+\beta a}b_q\geq \binom{a+\beta b}a_q. \end{align} \end{conjecture}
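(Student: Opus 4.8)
The plan is to translate the statement into a problem about counting partitions and then to exhibit a weight-preserving injection. Recall that the coefficient of $q^m$ in $\binom{N+k}{k}_q$ equals the number of partitions of $m$ whose Young diagram fits inside the $k\times N$ rectangle. Hence $\binom{b+\beta a}{b}_q$ is the generating function (by size) for partitions inside the $b\times\beta a$ box, while $\binom{a+\beta b}{a}_q$ is the generating function for partitions inside the $a\times\beta b$ box. Both boxes have area $\beta ab$, so the two polynomials have the same degree $\beta ab$, confirming the asserted symmetry; and when $\beta=1$ the two boxes are transposes of one another, giving the equality $\binom{a+b}{b}_q=\binom{a+b}{a}_q$ as the (trivial) base case. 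Inequality (3) is therefore equivalent to the combinatorial claim that, coefficientwise, the $b\times\beta a$ box supports at least as many partitions of each size as the $a\times\beta b$ box; it suffices to construct a size-preserving injection $\phi$ from partitions in the $a\times\beta b$ box into partitions in the $b\times\beta a$ box.

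To build $\phi$ I would exploit that $\beta$ is an integer by slicing columns into $\beta$ equal blocks. Given $\lambda\subseteq a\times\beta b$, cut each row into $\beta$ consecutive segments of width $b$; reading off the $k$-th segment of every row produces a partition $\rho^{(k)}\subseteq a\times b$, and these satisfy the nesting $\rho^{(1)}\supseteq\rho^{(2)}\supseteq\cdots\supseteq\rho^{(\beta)}$ with $|\lambda|=\sum_k|\rho^{(k)}|$. Thus partitions in the $a\times\beta b$ box correspond bijectively to certain chains of length $\beta$ in the $a\times b$ box; the analogous slicing (into segments of width $a$) identifies partitions in the $b\times\beta a$ box with chains of length $\beta$ in the $b\times a$ box. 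Transposing each term of a chain turns the first family into chains in the $b\times a$ box as well, so both generating functions become sums of $q^{\text{size}}$ over two explicit families of chains in one common $b\times a$ box --- one family \emph{saturated along rows}, the other \emph{saturated along columns}. Since $a<b$, the column direction is longer, and the heart of the matter becomes an injection from the row-saturated chains into the column-saturated chains.

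I would construct this injection recursively (insert the blocks $\rho^{(1)},\dots,\rho^{(\beta)}$ one at a time, pushing overflow past the diagonal), or equivalently as a non-intersecting lattice-path routing in the spirit of Lindström--Gessel--Viennot. The main obstacle --- and the step I expect to absorb most of the work --- is \emph{injectivity}. Well-definedness and weight preservation are easy for many natural maps (for instance, transpose each slice and re-sort the rows), but the obvious such maps collapse distinct preimages and so only prove the trivial equality of the total coefficient sums, not the coefficientwise inequality. The construction must therefore retain enough bookkeeping (which cell came from which block, and exactly how saturation is broken) to be invertible, and verifying that no two source partitions collide is precisely where the hypotheses $a<b$ and $\beta\in\mathbb{Z}_{\ge1}$ have to be used quantitatively.

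Should the injection prove stubborn, I would fall back on two routes. First, induction on $\beta$: expand $\binom{b+\beta a}{b}_q$ by the $q$-Vandermonde convolution in terms of $\binom{b+(\beta-1)a}{\,\cdot\,}_q$ and try to write the difference in (3) as a positive combination of the difference at $\beta-1$ together with manifestly nonnegative terms; the difficulty here is that the convolution produces cross terms $\binom{b+(\beta-1)a}{b-j}_q$ that do not line up termwise with the corresponding expansion of $\binom{a+\beta b}{a}_q$, so the telescoping is not automatic. Second, one can try to deduce (3) from known $q$-log-concavity inequalities for Gaussian polynomials by bounding the relevant ratio of products, or, following Zanello's treatment of $a\le3$, settle the small-$a$ cases by a direct generating-function computation and use them both as evidence and as a check on the general injection.
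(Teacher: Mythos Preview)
This statement is labeled a \emph{conjecture} in the paper, and the authors explicitly do not prove it: they write that their ``journey in this effort failed short of capturing the $\beta$-Conjecture in its fullest'' and settle only the instance $\beta=2$ (their Theorem~4). So there is no proof in the paper to compare against, and a correct argument for the full statement would be a new result.

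Your proposal is not such an argument. The partition interpretation and the slicing into $\beta$ blocks are correct, and it is true that partitions in the $a\times\beta b$ box (resp.\ the $b\times\beta a$ box) biject with chains $\rho^{(1)}\supseteq\cdots\supseteq\rho^{(\beta)}$ in the $a\times b$ box satisfying the saturation constraint $\rho^{(k+1)}_i>0\Rightarrow\rho^{(k)}_i=b$ (resp.\ the analogous constraint with $a$ and $b$ swapped). But after this reformulation the entire content of the conjecture is the size-preserving injection between the two chain families, and you do not construct it: you say you ``would construct this injection recursively \dots\ or equivalently as a non-intersecting lattice-path routing,'' and you yourself flag injectivity as ``the main obstacle.'' That is precisely the gap. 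No specific map is given, and the LGV allusion is too vague to pin down sources, sinks, or why the relevant paths would be non-intersecting. Your two fallback routes are likewise programmatic sketches rather than arguments.

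For context, the paper's proof for $\beta=2$ takes a completely different, algebraic route. Writing $b=a+i$, it expands both $\binom{3a+i}{a+i}_q$ and $\binom{3a+2i}{a}_q$ against the common family $\binom{3a+i}{a-k}_q$ via the $q$-Vandermonde identity together with a WZ-certified companion identity (Lemma~5), so that the difference becomes
\[
\sum_{k\ge1}q^{(a-k)(i-k)}\Bigl[\tbinom{i+k}{2k}_q-\tbinom{i}{k}_q+q^{a+i}\tbinom{i+k-1}{2k-1}_q\Bigr]\tbinom{3a+i}{a-k}_q,
\]
and then checks that each bracket is coefficientwise nonnegative using the elementary inequality $\binom{i+k}{i-k}_q\ge\binom{i}{i-k}_q$. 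This argument is tailored to $\beta=2$ and does not obviously extend, which is why the general conjecture remains open.
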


\smallskip
\noindent
The case $\beta=1$ is trivial. However, our journey in this effort failed short of capturing the $\beta$-Conjecture in its fullest. In the sequel, we supply the details of our success in settling the particular instance $\beta=2$. Let's commence by stating one useful identity.

\begin{theorem} ($q$-analogue Vandermonde-Chu). The following holds true
\begin{align}\sum_{j\geq0}\binom{X}{Z-j}_q\binom{Y}j_qq^{j(X-Z+j)}=\binom{X+Y}Z_q.\end{align} 
\end{theorem}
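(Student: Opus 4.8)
The plan is to deduce the identity from the finite $q$-binomial theorem (Rothe's formula),
\[\prod_{i=0}^{n-1}(1+q^i z)=\sum_{k\geq0}\binom{n}{k}_q q^{\binom{k}{2}}z^k,\]
which I would first establish by a short induction on $n$ using the $q$-Pascal recurrence $\binom{n}{k}_q=\binom{n-1}{k-1}_q+q^k\binom{n-1}{k}_q$ (equivalently by peeling off the factor $1+q^{n-1}z$). This reduces the whole problem to a single coefficient extraction, so the combinatorics of the Gaussian polynomials never needs to be invoked directly.

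Next I would factor the length-$(X+Y)$ product at the index $X$:
\[\prod_{i=0}^{X+Y-1}(1+q^i z)=\Big(\prod_{i=0}^{X-1}(1+q^i z)\Big)\Big(\prod_{i=0}^{Y-1}(1+q^{X}q^{i}z)\Big),\]
then apply Rothe's formula to each factor, treating the second product as the first product evaluated at $q^X z$ with $Y$ rows. Expanding the right-hand side as a double sum over the exponent $a$ coming from the $X$-product and $j$ from the $Y$-product, and equating the coefficient of $z^Z$ on both sides (so that $a=Z-j$), I would obtain
\[\binom{X+Y}{Z}_q\, q^{\binom{Z}{2}}=\sum_{j\geq0}\binom{X}{Z-j}_q\binom{Y}{j}_q\, q^{\binom{Z-j}{2}+\binom{j}{2}+Xj}.\]

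The remaining work is pure exponent bookkeeping: one verifies the elementary identity
\[\binom{Z-j}{2}+\binom{j}{2}+Xj-\binom{Z}{2}=j(X-Z+j),\]
after which dividing both sides by $q^{\binom{Z}{2}}$ yields exactly the asserted formula. This last computation is the only place where anything can go wrong, and it is entirely routine, so there is no genuine obstacle; the real content is simply choosing the factorization point and matching the three instances of Rothe's formula. For completeness I would note two alternatives that reach the same destination: a self-contained induction on $Y$, splitting $\binom{X+Y}{Z}_q$ by $q$-Pascal and recombining the two resulting sums; and a bijective argument in the lattice-path (or inversion) model, decomposing a path in the $Z\times(X+Y-Z)$ box according to how it meets the vertical cut separating the first $X$ columns from the remaining $Y$, with the factor $q^{j(X-Z+j)}$ accounting for the rectangular block of cells swept out below the crossing. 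I would present the generating-function route as the cleanest, keeping the other two in reserve.
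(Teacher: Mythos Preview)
Your argument is correct: the derivation from Rothe's finite $q$-binomial theorem via the factorization at index $X$ and the exponent computation $\binom{Z-j}{2}+\binom{j}{2}+Xj-\binom{Z}{2}=j(X-Z+j)$ all check out, and this is indeed one of the standard textbook proofs of the $q$-Vandermonde--Chu identity.

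There is, however, nothing to compare against: the paper does not prove Theorem~1 at all. It is quoted as a known identity (hence the attribution ``$q$-analogue Vandermonde--Chu'' in the heading) and is immediately followed by a remark applying it; the authors use it as a tool in Lemmas~1 and~4 rather than as a result to be established. So your proposal goes strictly beyond what the paper does on this point. If you want to include a proof for self-containedness, the generating-function route you outline is the cleanest choice; the induction on $Y$ and the lattice-path bijection you mention as alternatives are also valid and well known.
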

\begin{remark} In view of (4), Conjecture 1 tantamount
\begin{align*}
\binom{b+c}b_q=\sum_{k=0}^{\pmb{b}}\binom{b}k_q\binom{c}k_qq^{k^2} \geq \sum_{k=0}^{\pmb{a}}\binom{a}k_q\binom{d}k_qq^{k^2}=\binom{a+d}a_q. \end{align*}
\end{remark}

\section{THE CASE $\beta=2$ and $q=1$}

\noindent
In this section, we wish to explain the resolution of the $\beta$-Conjecture for the ordinary binomial coefficients ($q=1$) while $\beta=2$, which elaborates a natural development. 

\smallskip
\noindent
For $a<b<c<d$ with $ad=bc$ and special case $c=2a,d=2b$, say $b=a+i,\,i\ge 1$, it would be desirable to find a bijective proof for
$$\binom{3a+i}{a+i}\geq \binom{3a+2i}{a}\, .$$
An injection from a set counted by the smaller number to one counted by the larger number would be nice but a better proof would be an expression for the difference as a sum of obviously positive terms. For $i=1$, we have 
$$\binom{3a+1}{a+1}-\binom{3a+2}{a} = \binom{3a+1}{a-1}\, ,$$
and the right-hand side is clearly positive. It seems for general $i=1,2,\dots,$ 
$$\binom{3a+i}{a+i} - \binom{3a+2i}{a} = \sum_{k=1}^i c_k(i) \binom{3a+i}{a-k}$$
for integers $c_k(i)$ and, furthermore, the $c_k(i)$ \emph{are all positive}. Here is a table for $c_k(i)$ when $1\leq k\leq i\leq8$:
\[
\begin{array}{cccccccc}
 1 & \text{} & \text{} & \text{} & \text{} & \text{} & \text{} & \text{} \\
 3 & 1 & \text{} & \text{} & \text{} & \text{} & \text{} & \text{} \\
 6 & 6 & 1 & \text{} & \text{} & \text{} & \text{} & \text{} \\
 10 & 19 & 9 & 1 & \text{} & \text{} & \text{} & \text{} \\
 15 & 45 & 39 & 12 & 1 & \text{} & \text{} & \text{} \\
 21 & 90 & 120 & 66 & 15 & 1 & \text{} & \text{} \\
 28 & 161 & 301 & 250 & 100 & 18 & 1 & \text{} \\
 36 & 266 & 658 & 755 & 450 & 141 & 21 & 1 \\
\end{array}
\]
but appeared hard to get a handle on them. The evolution of our next progress begins with the discovery of
$$c_k(i)=\binom{i+k-1}{2k}+2\binom{i+k-1}{2k-1}-\binom{i}k.$$
Let's contract these coefficients as $c_k(i)=\frac{i+3k}{i+k}\binom{i+k}{2k}-\binom{i}k$, for $i, k\geq1$. Notice $c_0(i)=0$. We need some preliminary results.

\begin{lemma} We have
$$\binom{3a+2i}a=\sum_{k\geq0}\binom{i}k\binom{3a+i}{a-k}.$$ \end{lemma}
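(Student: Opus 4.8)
The plan is to recognize this identity as nothing more than the classical Vandermonde--Chu convolution, namely the $q=1$ specialization of identity (4). Setting $q=1$ annihilates the factor $q^{j(X-Z+j)}$ in (4) and leaves the ordinary convolution $\sum_{j\geq0}\binom{X}{Z-j}\binom{Y}{j}=\binom{X+Y}{Z}$. I would then simply match parameters by choosing $X=3a+i$, $Y=i$, and $Z=a$, and renaming the summation index $j$ as $k$. With these choices $\binom{X}{Z-k}=\binom{3a+i}{a-k}$, $\binom{Y}{k}=\binom{i}{k}$, and $X+Y=3a+2i$ while $Z=a$, so the right-hand side of (4) becomes $\binom{3a+2i}{a}$ and its left-hand side becomes $\sum_{k\geq0}\binom{i}{k}\binom{3a+i}{a-k}$, which is exactly the claimed equality.

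Before invoking (4) I would confirm the minor bookkeeping: although the stated sum runs over all $k\geq0$, the factor $\binom{i}{k}$ vanishes for $k>i$ and $\binom{3a+i}{a-k}$ vanishes for $k>a$ (and for $k<0$), so only finitely many terms survive, matching the finite Vandermonde sum and making the specialization fully legitimate. As an independent sanity check I would also supply the direct combinatorial argument: $\binom{3a+2i}{a}$ counts the $a$-element subsets of a ground set of size $3a+2i$; splitting that ground set into a distinguished block $B$ of size $i$ and its complement of size $3a+i$, and classifying each $a$-subset by the number $k$ of elements it draws from $B$, yields $\sum_{k\geq0}\binom{i}{k}\binom{3a+i}{a-k}$.

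I do not anticipate a genuine obstacle in this lemma: its entire content is the Vandermonde convolution, and the only care required is the parameter matching and the vanishing of out-of-range binomials. The substantive work of the section lies beyond it, in combining this expansion with the companion expansion of $\binom{3a+i}{a+i}$ and the explicit formula for $c_k(i)$ to establish the positivity of the difference; the present lemma is simply a clean stepping stone toward that goal.
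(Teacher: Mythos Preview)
Your proposal is correct and follows essentially the same route as the paper: both recognize the identity as the $q=1$ Vandermonde--Chu convolution applied to $\binom{3a+2i}{a}=\binom{(3a+i)+i}{a}$, with the only cosmetic difference being which factor you label $X$ and which you label $Y$. Your added combinatorial sanity check is a nice bonus but not needed for the argument.
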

\begin{proof} This follows from the Vandermonde-Chu identity (Theorem 1 for $q=1$)
$$\binom{X+Y}Z=\sum_{k\geq0}\binom{X}k\binom{Y}{Z-k}$$
applied to $\binom{3a+2i}a=\binom{i+3a+i}a$ with $X=i, Y=3a+i$ and $Z=a$.
\end{proof}

\begin{lemma} We have 
$$\binom{3a+i}{a+i}=\sum_{k\geq0} \frac{i+3k}{i+k}\binom{i+k}{2k}\binom{3a+i}{a-k}
=\sum_{k\geq0} \left[\binom{i+k}{2k}+\binom{i+k-1}{2k-1}\right]\binom{3a+i}{a-k}.$$ \end{lemma}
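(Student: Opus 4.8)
The second equality in the statement I would dispose of immediately, since it is purely algebraic: writing $\frac{i+3k}{i+k}\binom{i+k}{2k}=\binom{i+k}{2k}+\frac{2k}{i+k}\binom{i+k}{2k}$ and noting $\frac{2k}{i+k}\binom{i+k}{2k}=\binom{i+k-1}{2k-1}$, the two displayed forms of the coefficient coincide for every $k\ge 1$ (and for $k=0$ both give $1$). So the real content is the first equality, namely that $\binom{3a+i}{a+i}=\sum_{k\ge 0}d_k(i)\binom{3a+i}{a-k}$ with $d_k(i)=\frac{i+3k}{i+k}\binom{i+k}{2k}$. Note that $d_k(i)=0$ for $k>i$, so the sum is finite and $D(F):=\sum_{k\ge 0}d_k(i)F^k$ is a genuine polynomial of degree $i$ in $F$.

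My plan is to pass to generating functions in $a$ (with $i$ fixed), using the cubic Fuss--Catalan (tree-function) series. Introduce $F=F(t)$ by the functional equation $F=t(1+F)^3$. The engine is Lagrange--B\"urmann inversion, which applied to $\binom{3a+i}{a-k}=[z^a]z^k(1+z)^i\big((1+z)^3\big)^a$ (using $t\,\phi'(F)=3F/(1+F)$, so $1-t\,\phi'(F)=(1-2F)/(1+F)$ for $\phi(z)=(1+z)^3$) yields the closed column generating functions
\[
\sum_{a\ge 0}\binom{3a+i}{a-k}t^a=\frac{F^k(1+F)^{i+1}}{1-2F}.
\]
Summing against $d_k(i)$, the right-hand side of the lemma therefore has generating function $\frac{(1+F)^{i+1}}{1-2F}\,D(F)$. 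For the left-hand side I would apply the same inversion to $\binom{3a+i}{a+i}=[z^{a+i}](1+z)^{-2i}\big((1+z)^3\big)^{a+i}$, which gives $\binom{3a+i}{a+i}=[t^{a+i}]\,\frac{(1+F)^{1-2i}}{1-2F}$; writing $G(t)=\frac{(1+F)^{1-2i}}{1-2F}$, this makes $\sum_{a\ge 0}\binom{3a+i}{a+i}t^a=t^{-i}\big(G(t)-G_{<i}(t)\big)$, where $G_{<i}$ is the truncation of $G$ to degrees below $i$ and, by the same Fuss--Catalan formula with exponent $-2i$, one has $[t^m]G=\binom{3m-2i}{m}$.

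Comparing the two generating functions and using $t^i=F^i/(1+F)^{3i}$, the lemma becomes equivalent to the single algebraic identity
\[
\frac{(1+F)^{1-2i}}{1-2F}\big(1-F^i D(F)\big)=\sum_{m=0}^{i-1}\binom{3m-2i}{m}t^m,\qquad t=\frac{F}{(1+F)^3}.
\]
I expect this step to be the main obstacle. A priori the left side is an infinite power series in $F$, and the whole point is that it \emph{terminates}: it must collapse to a polynomial in $t$ of degree $<i$. This is a concrete finite cancellation rather than a transcendental one, and in small cases it is governed by a clean factorization of $1-F^i D(F)$: for $i=1$ one has $1-F(1+2F)=(1+F)(1-2F)$, so the left side reduces to $1$, and for $i=2$ one has $1-F^2(1+5F+2F^2)=(1+2F+3F^2+F^3)(1-2F)$, so it reduces to $1-t$. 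The plan is to prove the displayed identity for general $i$ either by induction on $i$ (exhibiting the analogous factor $(1-2F)$ and relating the degree-$i$ polynomial $D$ at consecutive values of $i$) or by clearing denominators to an ordinary polynomial identity in $F$ of bounded degree and checking the finitely many surviving coefficients.

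As a cleaner alternative that sidesteps the truncation bookkeeping, I would instead prove the identity by creative telescoping. The target $u_a=\binom{3a+i}{a+i}=\frac{(3a+i)!}{(a+i)!(2a)!}$ satisfies the first-order recurrence
\[
(a+i+1)(2a+1)(2a+2)\,u_{a+1}=(3a+i+1)(3a+i+2)(3a+i+3)\,u_a,
\]
with $u_0=1$. It then suffices to show that $v_a=\sum_{k}d_k(i)\binom{3a+i}{a-k}$ obeys the very same recurrence and satisfies $v_0=1$ (indeed only the $k=0$ term survives at $a=0$). Here Zeilberger's algorithm applied to the summand $d_k(i)\binom{3a+i}{a-k}$ should certify exactly this order-one recurrence through a rational certificate $R(a,k)$; producing that certificate (or verifying by hand that the indicated operator applied to the summand telescopes in $k$) is the technical heart of this second route.
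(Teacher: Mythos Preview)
Your reduction of the second equality to $\frac{2k}{i+k}\binom{i+k}{2k}=\binom{i+k-1}{2k-1}$ is fine, and both strategies you outline for the first equality are sound in principle. But neither is carried to completion: in the generating-function route you explicitly flag the terminating identity for $\frac{(1+F)^{1-2i}}{1-2F}(1-F^iD(F))$ as ``the main obstacle'' and only verify $i=1,2$; in the telescoping route you state that Zeilberger's algorithm in $a$ \emph{should} produce the order-one recurrence but do not exhibit the certificate. As written, the argument is a plan rather than a proof, and there is no a~priori guarantee that Zeilberger in $a$ terminates at order one (the summand's $a$-shift ratio involves three linear factors up and three down, so the Gosper equation is not obviously solvable at that order without actually running it).

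The paper's proof is close in spirit to your second route but makes a different and decisive choice: it runs the Wilf--Zeilberger method in the variable $i$ rather than $a$. Dividing through by $\binom{3a+i}{a+i}$ and setting
\[
F(i,k)=\frac{i+3k}{i+k}\,\frac{\binom{i+k}{2k}\binom{3a+i}{a-k}}{\binom{3a+i}{a+i}},\qquad
G(i,k)=-\,\frac{\binom{i+k-1}{2k-2}\binom{3a+i}{a-k}}{\binom{3a+i}{a+i}},
\]
one checks $F(i+1,k)-F(i,k)=G(i,k+1)-G(i,k)$ directly, so $\sum_k F(i,k)$ is constant in $i$, and the value $1$ is read off at $i=1$. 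The payoff of indexing by $i$ is that the certificate is this simple closed form and the base case is a two-term sum; your choice of $a$ would (if it works at order one) require a messier certificate and does not exploit the fact that the weight $d_k(i)$ is where the nontrivial dependence lives. If you want to salvage your Fuss--Catalan approach, the cleanest way to finish is probably to prove the polynomial factorization $1-F^iD(F)=(1-2F)\,Q_i(F)$ by the recursion $d_k(i+1)=d_k(i)+d_{k-1}(i)+\binom{i+k-1}{2k-1}$, but that is more work than the paper's one-line WZ pair.
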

\begin{proof} We implement Zeilberger's algorithm (from the Wilf-Zeilberger theory). Define 
$$F(i,k)=\frac{i+3k}{i+k}\cdot \frac{\binom{i+k}{2k}\binom{3a+i}{a-k}}{\binom{3a+i}{a+i}} \qquad \text{and} \qquad
G(i,k)=-\frac{\binom{i+k-1}{2k-2}\binom{3a+i}{a-k}}{\binom{3a+i}{a+i}}.$$
Check that $F(i+1,k)-F(i,k)=G(i,k+1)-G(i,k)$ and sum both sides over all integer values $k$. Then, notice the right-hand side vanishes and hence we obtain a sum $\sum_k F(i,k)$ that is \it constant \rm in the variable $i$. Determine this constant by substituting, say $i=1$,
$$\sum_{k=0}^1F(1,k)=\frac{\binom{3a+1}a}{\binom{3a+1}{a+1}}+\frac{2\binom{3a+1}{a-1}}{\binom{3a+1}{a+1}}=\frac{a+1}{2a+1}+\frac{a}{2a+1}=1.$$ 
Therefore, $\sum_kF(i,k)=1$, identically, for all $i\geq1$. The proof follows.
\end{proof}

\noindent
We now state the main result of this section.

\begin{theorem} We have
$$\binom{3a+i}{a+i}-\binom{3a+2i}a
=\sum_{k\geq1}\left\{\frac{i+3k}{i+k}\binom{i+k}{2k}-\binom{i}k \right\}\binom{3a+i}{a-k}.$$ \end{theorem}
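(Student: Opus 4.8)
The plan is short because the two preceding lemmas do essentially all the work: each of them expands one of the two quantities over the \emph{same} family of integers $\binom{3a+i}{a-k}$, $k\ge0$. First I would place the two expansions next to one another — Lemma 1 gives $\binom{3a+2i}{a}=\sum_{k\ge0}\binom{i}{k}\binom{3a+i}{a-k}$ and Lemma 2 gives $\binom{3a+i}{a+i}=\sum_{k\ge0}\frac{i+3k}{i+k}\binom{i+k}{2k}\binom{3a+i}{a-k}$ — and then subtract them. Both are genuinely finite sums (the factor $\binom{3a+i}{a-k}$ vanishes once $k>a$, and $\binom{i+k}{2k}$ vanishes once $k>i$), so termwise subtraction is legitimate and produces the coefficient $\frac{i+3k}{i+k}\binom{i+k}{2k}-\binom{i}{k}$ in front of $\binom{3a+i}{a-k}$ for each $k$.

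The one remaining check is the bottom of the summation. At $k=0$ both lemmas contribute coefficient $1$, since $\frac{i+0}{i+0}\binom{i}{0}=1$ and $\binom{i}{0}=1$; hence the $k=0$ term cancels and the sum may be started at $k=1$, exactly as stated. This also recovers $c_0(i)=0$ and the closed form $c_k(i)=\frac{i+3k}{i+k}\binom{i+k}{2k}-\binom{i}{k}$ announced earlier.

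There is no real obstacle inside this theorem: it is an immediate corollary of Lemmas 1 and 2, and all the genuine difficulty was already absorbed into establishing Lemma 2 via Zeilberger's algorithm (the verification of the WZ certificate pair $F,G$ and the evaluation of the constant $\sum_k F(1,k)=1$). If I wanted the argument to be fully self-contained here, the only point worth flagging is that the two expansions are written over literally the same basis element $\binom{3a+i}{a-k}$, so no re-indexing is required before collecting coefficients; once that is observed, the displayed identity is immediate.
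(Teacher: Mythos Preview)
Your proposal is correct and matches the paper's own proof, which simply states that the identity is immediate from Lemma~1 and Lemma~2. Your extra remarks about finiteness of the sums and the cancellation of the $k=0$ term are fine elaborations of exactly this one-line deduction.
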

\begin{proof} Immediate from Lemma 1 and Lemma 2.
\end{proof}

\begin{lemma} For $k\geq1$, the coefficients $c_k(i)$ are non-negative. \end{lemma}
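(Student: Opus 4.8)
The plan is to work from the contracted closed form and reduce everything to a single elementary binomial inequality. Recall from the discussion preceding Lemma 1 that $c_k(i)=\frac{i+3k}{i+k}\binom{i+k}{2k}-\binom{i}{k}$. I would first rewrite the leading term additively: since $\frac{i+3k}{i+k}=1+\frac{2k}{i+k}$ and $\frac{2k}{i+k}\binom{i+k}{2k}=\binom{i+k-1}{2k-1}$, we get $c_k(i)=\binom{i+k}{2k}+\binom{i+k-1}{2k-1}-\binom{i}{k}$ (this is exactly the uncontracted form displayed in the text, via Pascal's rule). Because the middle summand $\binom{i+k-1}{2k-1}$ is manifestly non-negative, it suffices to prove the one inequality $\binom{i+k}{2k}\geq\binom{i}{k}$.

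To establish this I would compare the two binomials multiplicatively rather than additively. A direct cancellation gives $\frac{\binom{i+k}{2k}}{\binom{i}{k}}=\prod_{j=1}^{k}\frac{i+j}{k+j}$, a product of $k$ factors. For $i\geq k$ each factor satisfies $\frac{i+j}{k+j}\geq 1$, so the whole product is at least $1$; hence $\binom{i+k}{2k}\geq\binom{i}{k}$ and therefore $c_k(i)\geq\binom{i+k-1}{2k-1}\geq 0$ in this range.

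It then remains to dispose of the degenerate range $i<k$. There $\binom{i}{k}=0$, while both $\binom{i+k}{2k}$ and $\binom{i+k-1}{2k-1}$ vanish because the lower index exceeds the upper one (namely $i+k<2k$ and $i+k-1<2k-1$); thus $c_k(i)=0$, which is still non-negative. Combining the two ranges yields the lemma for all $i\geq 1$ and $k\geq 1$.

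I do not anticipate a genuine obstacle here: the only real lever is the reformulation of $\binom{i+k}{2k}\big/\binom{i}{k}$ as a telescoping product of ratios, after which the implication $i\geq k\Rightarrow (i+j)/(k+j)\geq 1$ is immediate. The one point deserving attention is careful bookkeeping on the edge cases; as a bonus, the same product argument gives strict positivity whenever $i>k$ (every factor then exceeds $1$), and equality $\binom{i+k}{2k}=\binom{i}{k}=1$ at $i=k$ still leaves $c_k(k)=1>0$, matching the diagonal of the displayed table.
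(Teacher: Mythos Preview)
Your proof is correct and follows essentially the same route as the paper's first argument: both split $c_k(i)=\binom{i+k}{2k}+\binom{i+k-1}{2k-1}-\binom{i}{k}$ and then reduce to the single inequality $\binom{i+k}{2k}\geq\binom{i}{k}$, with the middle term manifestly non-negative. The only cosmetic difference is that the paper justifies this inequality by rewriting it as $\binom{i+k}{i-k}\geq\binom{i}{i-k}$ (monotonicity in the upper index), whereas you use the equivalent product $\prod_{j=1}^{k}\frac{i+j}{k+j}\geq 1$; the paper also records a second, telescoping proof expressing $c_k(i)$ as a sum of binomials.
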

\begin{proof} We may look at it in two different ways: 

\noindent
(1) $c_k(i)=\frac{2k}{i+k}\binom{i+k}{2k}+\binom{i+k}{2k}-\binom{i}k=\frac{2k}{i+k}\binom{i+k}{2k}+\binom{i+k}{i-k}-\binom{i}{i-k}$. Obviously  $\binom{i+k}{i-k}\geq\binom{i}{i-k}$,
 therefore $c_k(i)\geq0$.

\noindent
(2) $c_k(i)=\binom{i+k}{2k}+\binom{i+k-1}{2k-1}-\binom{i}k=\binom{i+k}{2k}+\sum_{r=0}^{k-2}\binom{i+r}{k+1+r}$ shows clearly that $c_k(i)\geq0$. The identity $\binom{i+k-1}{2k-1}=\binom{i}k+\sum_{r=1}^{k-1}\binom{i+r-1}{k+r}$ results from a cascading effect of the familiar binomial recurrence $\binom{u}v+\binom{u}{v-1}=\binom{u+1}{v}$.
\end{proof}

\section{ $q$-ANALOGUES when $\beta=2$}

\noindent
In the present section, we aim to generalize our proofs given in the preceding section by lifting the argument from the ordinary binomials to Gaussian polynomials.

\begin{lemma} We have
$$\binom{3a+2i}a_q=\sum_{k\geq0} q^{(a-k)(i-k)}\binom{i}k_q\binom{3a+i}{a-k}_q.$$ \end{lemma}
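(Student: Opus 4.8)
The plan is to replay the proof of Lemma~1 at the $q$-level, using Theorem~1 in place of the classical Vandermonde--Chu identity. The subtlety is that feeding $X=3a+i,\ Y=i,\ Z=a$ directly into (4) does reproduce the two Gaussian factors $\binom{3a+i}{a-j}_q$ and $\binom{i}{j}_q$, but with the power $q^{j(2a+i+j)}$ rather than the required $q^{(a-j)(i-j)}$. So the first step is to manufacture the reflected companion of (4), in which the exponent already appears in the desired factored form.

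To that end I would reindex the sum in Theorem~1 by the substitution $j\mapsto Z-j$. Tracking each factor gives $\binom{X}{Z-j}_q\mapsto\binom{X}{j}_q$, $\binom{Y}{j}_q\mapsto\binom{Y}{Z-j}_q$, and the exponent
$$j(X-Z+j)\ \longmapsto\ (Z-j)\bigl(X-Z+(Z-j)\bigr)=(Z-j)(X-j).$$
Since the Gaussian polynomials vanish automatically outside the admissible range of their lower index, the summation limits are unaffected, and one obtains the dual form
$$\sum_{j\ge0}\binom{X}{j}_q\binom{Y}{Z-j}_q\,q^{(X-j)(Z-j)}=\binom{X+Y}Z_q.$$

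It then remains only to specialize. Taking $X=i$, $Y=3a+i$, $Z=a$ yields $X+Y=3a+2i$, the two factors become $\binom{i}{k}_q\binom{3a+i}{a-k}_q$, and the exponent reads $(X-k)(Z-k)=(i-k)(a-k)=(a-k)(i-k)$, which is exactly the claimed identity. The only point requiring care is the index reflection and the accompanying change in the power of $q$; this is precisely the step that was invisible in the $q=1$ case (Lemma~1), where all $q$-weights collapse to $1$, and it is where I expect the real content to lie. Everything after the reflection is routine bookkeeping.
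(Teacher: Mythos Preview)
Your proposal is correct and is essentially the same argument as the paper's. The paper simply quotes the reflected form
\[
\binom{X+Y}Z_q=\sum_{k\ge0}q^{(Z-k)(X-k)}\binom{X}k_q\binom{Y}{Z-k}_q
\]
as ``the Vandermonde--Chu identity (Theorem~1)'' and then specializes $X=i$, $Y=3a+i$, $Z=a$; you take the extra step of deriving this reflected form from the version of Theorem~1 actually stated, via the index shift $j\mapsto Z-j$, which is exactly the right justification and adds nothing new beyond making that implicit step explicit.
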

\begin{proof} This follows from the \it Vandermonde-Chu \rm identity (Theorem 1)
$$\binom{X+Y}Z_q=\sum_{k\geq0}q^{(Z-k)(X-k)}\binom{X}k_q\binom{Y}{Z-k}_q$$
on $\binom{3a+2i}a_q=\binom{i+3a+i}a_q$ with $X=i, Y=3a+i$ and $Z=a$.
\end{proof}

\begin{lemma} We have
\begin{align*}
\binom{3a+i}{a+i}_q
&=\sum_{k\geq0}q^{(a-k)(i-k)}\left[\binom{i+k}{2k}_q+q^{a+i}\binom{i+k-1}{2k-1}_q\right]\binom{3a+i}{a-k}_q.
\end{align*} \end{lemma}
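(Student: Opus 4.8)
The plan is to reproduce, in the $q$-world, the creative-telescoping argument that established Lemma 2. Dividing the claimed identity through by $\binom{3a+i}{a+i}_q$ converts it into the assertion $\sum_{k\geq0}F(i,k)=1$ for all $i\geq1$, where
$$F(i,k)=q^{(a-k)(i-k)}\left[\binom{i+k}{2k}_q+q^{a+i}\binom{i+k-1}{2k-1}_q\right]\frac{\binom{3a+i}{a-k}_q}{\binom{3a+i}{a+i}_q}.$$
So the first step is to set up this normalized, $q$-hypergeometric summand as the direct analogue of the $F(i,k)$ appearing in Lemma 2.

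Next I would run the $q$-analogue of Zeilberger's algorithm to manufacture a certificate $G(i,k)$ deforming the one used in Lemma 2, of the shape
$$G(i,k)=-R(i,k)\,\frac{\binom{i+k-1}{2k-2}_q\,\binom{3a+i}{a-k}_q}{\binom{3a+i}{a+i}_q},$$
with $R(i,k)$ a rational function in $q^i,q^k,q^a$, and then verify the telescoping relation $F(i+1,k)-F(i,k)=G(i,k+1)-G(i,k)$. Summing over all integers $k$ annihilates the right-hand side, because the $q$-binomials force $G(i,\cdot)$ to have finite support (the factor $\binom{3a+i}{a-k}_q=0$ once $k>a$, and the numerator $q$-binomials vanish at the low end). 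Hence $\sum_kF(i,k)$ is independent of $i$.

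Finally I would fix the constant by evaluating at $i=1$, where only $k=0$ and $k=1$ contribute: the surviving numerators are $q^a\binom{3a+1}{a}_q$ and $(1+q^{a+1})\binom{3a+1}{a-1}_q$, and using $\binom{3a+1}{a}_q/\binom{3a+1}{a+1}_q=[a+1]_q/[2a+1]_q$ together with the factorizations $[2a+2]_q=(1+q^{a+1})[a+1]_q$ and $[2a+1]_q=[a]_q+q^a[a+1]_q$ collapses their sum to $1$. This is the exact $q$-refinement of the classical computation $\frac{a+1}{2a+1}+\frac{a}{2a+1}=1$ used in Lemma 2.

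The main obstacle will be pinning down and checking the certificate: the two $q$-weights $q^{(a-k)(i-k)}$ and the asymmetric factor $q^{a+i}$ attached to the second $q$-binomial make $R(i,k)$ markedly more involved than its classical shadow, so confirming that $F(i+1,k)-F(i,k)-G(i,k+1)+G(i,k)$ vanishes identically will demand repeated, careful application of the two $q$-Pascal recurrences $\binom{u}{v}_q=\binom{u-1}{v-1}_q+q^v\binom{u-1}{v}_q=q^{u-v}\binom{u-1}{v-1}_q+\binom{u-1}{v}_q$ in order to clear denominators uniformly in $q$. A secondary point is to record that the $i=1$ evaluation really does truncate to $k\in\{0,1\}$, which follows since $\binom{1+k}{2k}_q$ and $\binom{k}{2k-1}_q$ both vanish for $k\geq2$.
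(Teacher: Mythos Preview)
Your plan is exactly the paper's proof: the authors normalize by $\binom{3a+i}{a+i}_q$, apply the Wilf--Zeilberger method with the very certificate shape you anticipate, telescope, and then evaluate at $i=1$ to obtain the constant $1$. The one pleasant surprise is that the rational prefactor you call $R(i,k)$ is simply the monomial $q^{(a-k+1)(i-k+1)}$, so the certificate is no more complicated than in the $q=1$ case; the added work sits, as you predicted, in the rational-function verification of $F(i+1,k)-F(i,k)=G(i,k+1)-G(i,k)$.
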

\begin{proof} Let's rewrite $\binom{i+k}{2k}_q+q^{a+i}\binom{i+k-1}{2k-1}_q=\left[1+\frac{q^{a+i}(1-q^{2k})}{1-q^{i+k}}\right]\binom{i+k}{2k}_q$ and define the functions
\begin{align*} 
F(i,k)&=q^{(a-k)(i-k)}\left[1+q^{a+i}\cdot\frac{1-q^{2k}}{1-q^{i+k}}\right]\frac{\binom{i+k}{2k}_q\binom{3a+i}{a-k}_q}{\binom{3a+i}{a+i}_q}, \qquad \text{and} \\
G(i,k)&=-q^{(a-k+1)(i-k+1)}\cdot\frac{\binom{i+k-1}{2k-2}_q\binom{3a+i}{a-k}_q}{\binom{3a+i}{a+i}_q}.
\end{align*}
Divide both sides of the intended identity by $\binom{3a+i}{a+i}_q$. Our goal is to prove $\sum_kF(i,k)=1$ by adopting the Wilf-Zeilberger technique. To this end, calculate the following two ratios 
$$A(i,k):=\frac{F(i+1,k)}{F(i,k)}-1 \qquad \text{and} \qquad B(i,k):=\frac{G(i,k+1)}{F(i,k)}-\frac{G(i,k)}{F(i,k)}$$ 
resulting in
\begin{align*}
A(i,k)&=\frac{q^{a-k}(1-q^{i+k})(1-q^{a+i+1})(1-q^{i+k+1}+q^{a+i+1}-q^{a+i+2k+1})}{(1-q^{i-k+1})(1-q^{2a+i+k+1})(1-q^{i+k}+q^{a+i}-q^{a+i+2k})}-1 \qquad \text{and} \\
B(i,k)&=\left[-\frac{1-q^{a-k}}{1-q^{2a+ i+ k+1}}+\frac{q^{a+i-2k+1}(1-q^{2k})(1-q^{2k-1})}{(1-q^{i+k})(1-q^{i-k+1})}\right]\cdot
\frac{1-q^{i+k}}{1-q^{i+k}+q^{a+i}-q^{a+i+2k}}.
\end{align*}
Verify routinely $A(i,k)=B(i,k)$. Thus $F(i+1,k)-F(i,k)=G(i,k+1)-G(i,k)$. Now, sum both sides over all integer values $k$. Then, notice that the right-hand side vanishes and hence we obtain a sum $\sum_k F(i,k)$ that is \it constant \rm in the variable $i$. Determine this constant by substituting, say $i=1$ and proceed with some simplifications leading to 
$$\sum_{k=0}^1F(1,k)=q^a\cdot \frac{1-q^{a+1}}{1-q^{2a+1}}+\frac{1-q^a}{1-q^{2a+1}}=1.$$ 
Therefore, $\sum_kF(i,k)=1$, identically, for all $i\geq1$. The assertion follows.
\end{proof}

\begin{lemma} We have the identity
\begin{align*}
\binom{i+k}{2k}_q=\binom{i}k_q+\sum_{r=1}^kq^{k+r}\binom{i+r-1}{k+r}_q.
\end{align*}
\end{lemma}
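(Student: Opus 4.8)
The plan is to realize the right-hand side as a telescoping sum driven by the $q$-analogue of Pascal's recurrence, mirroring the ``cascading'' binomial argument already used in the proof of Lemma 3. Recall that the Gaussian polynomials obey the $q$-Pascal identity
$$\binom{n}{m}_q=\binom{n-1}{m-1}_q+q^{m}\binom{n-1}{m}_q.$$
The key observation is that applying this recurrence with $n=i+r$ and $m=k+r$ and rearranging gives
$$\binom{i+r}{k+r}_q-\binom{i+r-1}{k+r-1}_q=q^{k+r}\binom{i+r-1}{k+r}_q.$$
This is precisely the $r$-th summand on the right-hand side of the claimed identity, now exhibited as the difference of two consecutive terms of the sequence $a_r:=\binom{i+r}{k+r}_q$.

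The next step is simply to sum this relation over $r=1,\dots,k$. The left-hand side telescopes to $a_k-a_0$, and since $a_k=\binom{i+k}{2k}_q$ and $a_0=\binom{i}{k}_q$, we arrive at
$$\binom{i+k}{2k}_q-\binom{i}{k}_q=\sum_{r=1}^{k}q^{k+r}\binom{i+r-1}{k+r}_q,$$
which rearranges to the assertion. No generating-function machinery or Wilf--Zeilberger certification is needed here, in contrast to Lemma 5; the identity is purely structural.

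The only point requiring care---and the sole potential obstacle---is selecting the correct one of the two $q$-Pascal recurrences, namely the form in which the factor $q^{m}$ multiplies the term that \emph{retains} the lower index $m$, rather than the companion form $\binom{n}{m}_q=q^{\,n-m}\binom{n-1}{m-1}_q+\binom{n-1}{m}_q$. With the chosen form the exponent comes out as $q^{k+r}$, matching the claim exactly and leaving a coefficient of $1$ on $\binom{i+r-1}{k+r-1}_q$ so that the cascade telescopes cleanly; the companion recurrence would instead attach a spurious power of $q$ to the $a_{r-1}$ term and destroy the telescoping. Once this choice is fixed, the computation is immediate.
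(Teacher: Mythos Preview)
Your proof is correct and is essentially the same approach as the paper's: the paper's one-line proof simply says to use the $q$-Pascal recurrences $\binom{a}{b}_q=\binom{a-1}{b}_q+q^{a-b}\binom{a-1}{b-1}_q=q^b\binom{a-1}{b}_q+\binom{a-1}{b-1}_q$, and your telescoping (``cascading'') argument with the second form at $n=i+r$, $m=k+r$ is exactly the intended unpacking of that hint. Your discussion of why the correct $q$-Pascal variant must be chosen is a helpful clarification, but the underlying method is identical.
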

\begin{proof} Use the recursive relations $\binom{a}b_q=\binom{a-1}b_q+q^{a-b}\binom{a-1}{b-1}_q=q^b\binom{a-1}b_q+\binom{a-1}{b-1}_q$.
\end{proof}

\begin{lemma} We have the inequality $\binom{i+k}{i-k}_q\geq\binom{i}{i-k}_q$.
\end{lemma}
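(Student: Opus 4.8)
The plan is to reduce the claimed inequality to the $q$-identity established in the immediately preceding lemma. First I would invoke the symmetry of Gaussian polynomials, $\binom{n}{m}_q=\binom{n}{n-m}_q$, in order to put both sides into the shape matching that identity. On the left, $\binom{i+k}{i-k}_q=\binom{i+k}{(i+k)-(i-k)}_q=\binom{i+k}{2k}_q$, while on the right, $\binom{i}{i-k}_q=\binom{i}{i-(i-k)}_q=\binom{i}{k}_q$. Hence the assertion $\binom{i+k}{i-k}_q\ge\binom{i}{i-k}_q$ is literally the same as $\binom{i+k}{2k}_q\ge\binom{i}{k}_q$.

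Next I would subtract and quote the preceding identity $\binom{i+k}{2k}_q=\binom{i}{k}_q+\sum_{r=1}^{k}q^{k+r}\binom{i+r-1}{k+r}_q$. This yields at once
$$\binom{i+k}{i-k}_q-\binom{i}{i-k}_q=\sum_{r=1}^{k}q^{k+r}\binom{i+r-1}{k+r}_q.$$
The right-hand side is a sum of Gaussian polynomials, each multiplied by a positive monomial $q^{k+r}$; since every $q$-binomial coefficient has non-negative coefficients (and degenerates to $0$ in the boundary case $i=k$, where $k+r>i+r-1$), the entire sum has non-negative coefficients. Therefore the difference is $\ge 0$ in the sense of the paper, which is the claim. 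This is exactly the $q$-level analogue of route (2) used for the ordinary coefficients in Lemma~3.

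As a sanity check and alternative, I would keep in reserve the purely combinatorial argument via the area statistic on lattice paths recalled in the introduction. Reading $\binom{i+k}{i-k}_q$ as the area generating function for lattice paths inside an $(i-k)\times 2k$ box, and $\binom{i}{i-k}_q$ as the generating function for paths inside the sub-box $(i-k)\times k$ (same number of rows, half as many columns), the smaller box embeds into the larger one, so the inclusion of configurations is an area-preserving injection. The difference then counts, with weight $q^{\mathrm{area}}$, precisely those paths in the bigger box that extend beyond the $k$-th column, and its coefficients are manifestly non-negative.

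I do not expect a genuine obstacle here. The only points requiring attention are that the parameters satisfy $1\le k\le i$, so that all the boxes and $q$-binomials are well defined, and that the symmetry reduction is applied in the correct direction. Once the preceding identity is invoked, the inequality follows immediately.
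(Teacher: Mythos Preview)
Your proof is correct, but your primary route differs from the paper's. The paper proves Lemma~7 \emph{independently} of Lemma~6, via the inversion-number interpretation of $\binom{n}{k}_q$: it injects words in $0^{i-k}1^{k}$ into words in $0^{i-k}1^{2k}$ by appending $k$ ones at the right, an operation that leaves the inversion number unchanged, and observes that the leftover words account for the nonnegative excess. You instead use symmetry to rewrite the inequality as $\binom{i+k}{2k}_q\ge\binom{i}{k}_q$ and then invoke Lemma~6, which makes Lemma~7 an immediate corollary rather than a standalone result. Both are valid; your reduction is slicker and shows that Lemma~7 carries no new content once Lemma~6 is in hand (consistent with the paper's remark in Theorem~4 that either lemma suffices), while the paper's direct bijective argument has the virtue of being self-contained and not relying on the telescoping identity. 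Your ``sanity check'' via the area model on lattice paths is essentially the paper's inversion argument transported to the other combinatorial interpretation mentioned in the introduction: appending ones at the end of a word corresponds to extending a path by trailing steps that contribute no new area, so the two injections are really the same map viewed through different statistics.
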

\begin{proof} We use the interpretation of the Gaussian polynomials as the \emph{inversion number} generating function for all bit
strings of length $n$ with $k$ zeroes and $n-k$ ones, that is 
$$\binom{n}k_q=\sum_{w\in\, 0^k1^{n-k}}q^{\text{inv}(w)}.$$
Let $w'\in\, 0^{i-k}1^k\sqcup 1^k$ denote a bit where the last $k$ digits are all ones. In this sense, we get
\begin{align*}\binom{i+k}{i-k}_q=\sum_{w\in\,0^{i-k}1^{2k}}q^{\text{inv}(w)}
&=\sum_{w'\in\,0^{i-k}1^k\sqcup 1^k}q^{\text{inv}(w')}+\sum_{w'\not\in\,0^{i-k}1^k\sqcup 1^k}q^{\text{inv}(w')} \\
&=\sum_{w\in\,0^{i-k}1^k}q^{\text{inv}(w)}+\sum_{w'\not\in\,0^{i-k}1^k\sqcup 1^k}q^{\text{inv}(w')} \\
&=\binom{i}{i-k}_q+\sum_{w'\not\in\,0^{i-k}1^k\sqcup 1^k}q^{\text{inv}(w')}
\end{align*}
where we note that $\text{inv}(w')=\text{inv}(w)$ if the word $w'\in\,0^{i-k}1^k\sqcup 1^k$ is associated with $w\in\,0^{i-k}1^k$ found by dropping the last $k$ ones.
The assertion is now immediate.
\end{proof}

\noindent
We prove the main result of this section and our paper, the $\beta$-Conjecture for $\beta=2$.

\begin{theorem} The polynomial $P(q):=\binom{3a+i}{a+i}_q-\binom{3a+2i}a_q$ has non-negative coefficients. \end{theorem}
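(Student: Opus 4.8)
The plan is to combine the two expansions already in hand and then reduce the claim to a term-by-term positivity check on the coefficients. Subtracting the Lemma 3 expansion of $\binom{3a+2i}a_q$ from the Lemma 4 expansion of $\binom{3a+i}{a+i}_q$, I would write
$$P(q)=\sum_{k\geq0}q^{(a-k)(i-k)}\,C_k(q)\,\binom{3a+i}{a-k}_q,\qquad\text{where}\qquad C_k(q):=\binom{i+k}{2k}_q+q^{a+i}\binom{i+k-1}{2k-1}_q-\binom{i}k_q.$$
Since $\binom{3a+i}{a-k}_q$ has non-negative coefficients, and a product of polynomials with non-negative coefficients again has non-negative coefficients, it suffices to show that every surviving summand is non-negative.

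Next I would pin down the effective range of summation and confirm that the prefactor is a genuine non-negative power of $q$. The $k=0$ term cancels because $C_0(q)=0$. For $k>i$ all three $q$-binomials in $C_k(q)$ vanish, so $C_k(q)=0$; for $k>a$ the factor $\binom{3a+i}{a-k}_q$ vanishes. Hence only indices $1\leq k\leq\min(a,i)$ contribute, and on this range both $a-k\geq0$ and $i-k\geq0$, so $q^{(a-k)(i-k)}$ is an honest non-negative power of $q$. This disposes of the only real bookkeeping worry, namely that the exponent $(a-k)(i-k)$ might a priori be negative.

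It then remains to prove $C_k(q)\geq0$ for $1\leq k\leq\min(a,i)$, and here I would invoke the preparatory lemmas. Either Lemma 5 or Lemma 6 yields $\binom{i+k}{2k}_q-\binom{i}k_q\geq0$: Lemma 6 states exactly $\binom{i+k}{i-k}_q\geq\binom{i}{i-k}_q$ with $\binom{i}{i-k}_q=\binom{i}k_q$, while Lemma 5 exhibits the difference as the manifestly positive sum $\sum_{r=1}^kq^{k+r}\binom{i+r-1}{k+r}_q$. Since the remaining summand $q^{a+i}\binom{i+k-1}{2k-1}_q$ already has non-negative coefficients, I conclude
$$C_k(q)=\Big(\binom{i+k}{2k}_q-\binom{i}k_q\Big)+q^{a+i}\binom{i+k-1}{2k-1}_q\geq0.$$
Substituting back, each summand of $P(q)$ is a product of polynomials with non-negative coefficients, and summing preserves non-negativity, which gives the theorem.

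I expect essentially no obstacle at this final stage: all the analytic difficulty has been absorbed into Lemma 4, whose Wilf--Zeilberger certificate supplies the crucial closed form for $\binom{3a+i}{a+i}_q$, and into the positivity Lemmas 5 and 6. The only point requiring genuine care is the one flagged above, verifying that the surviving terms carry a non-negative power of $q$, after which the argument is a clean assembly of the established pieces.
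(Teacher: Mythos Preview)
Your argument is correct and is essentially the paper's own proof: subtract the two $q$-Vandermonde/WZ expansions, note the $k=0$ term cancels, and invoke the positivity lemma $\binom{i+k}{i-k}_q\geq\binom{i}{i-k}_q$ (or its telescoping cousin) to handle $C_k(q)$; you even add the useful bookkeeping that the surviving range $1\le k\le\min(a,i)$ forces $(a-k)(i-k)\ge0$, a point the paper leaves implicit. The only slip is that your lemma labels are each off by one relative to the paper's numbering (what you call Lemmas~3--6 are the paper's Lemmas~4--7).
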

\begin{proof} From Lemma 4 and Lemma 5, we infer
\begin{align*}
P(q)=\sum_{k\geq1}q^{(a-k)(i-k)}\left[\binom{i+k}{2k}_q+q^{a+i}\binom{i+k-1}{2k-1}-\binom{i}k_q\right]\binom{3a+i}{a-k}_q.
\end{align*}
It suffices to verify positivity of the terms inside the inner parenthesis on the right-hand side. We may pair up these terms and compliment the lower index to the effect that
\begin{align*}
\binom{i+k}{2k}_q-\binom{i}k_q+q^{a+i}\binom{i+k-1}{2k-1}_q
=\binom{i+k}{i-k}_q-\binom{i}{i-k}_q+q^{a+i}\binom{i+k-1}{2k-1}_q.
\end{align*}
To reach the conclusion, simply apply Lemma 6 or Lemma 7.
\end{proof}

\section{Final remarks}

\noindent
In the present section, we close our discussion with one conjecture as a codicil of certain calculations we encountered while digging up ways to prove the $\beta$-Conjecture.

\begin{conjecture} For each $0\leq k\leq a<b$, we have
$$\binom{a}k_q\binom{a+b}{b-k}_q\geq \binom{b}k_q\binom{a+b}{a-k}_q \qquad \text{or} \qquad 
\binom{a}k_q\binom{b}k_q\binom{b+a}b_q\left[\frac1{\binom{a+k}k_q}-\frac1{\binom{b+k}k_q}\right]
\geq0.$$
\end{conjecture}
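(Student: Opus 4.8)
The two displayed inequalities are term-by-term identical, so it suffices to treat the first. Writing each product as a ratio of $q$-factorials,
\[
\binom{a}{k}_q\binom{a+b}{b-k}_q=\frac{[a+b]!_q\,[a]!_q}{[k]!_q\,[a-k]!_q\,[b-k]!_q\,[a+k]!_q},\qquad
\binom{b}{k}_q\binom{a+b}{a-k}_q=\frac{[a+b]!_q\,[b]!_q}{[k]!_q\,[a-k]!_q\,[b-k]!_q\,[b+k]!_q},
\]
and dividing each by the positive polynomial $\binom{a}{k}_q\binom{b}{k}_q\binom{a+b}{b}_q$ returns exactly $1/\binom{a+k}{k}_q$ and $1/\binom{b+k}{k}_q$. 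Thus the two forms coincide and I will prove the first, which I call the \emph{core inequality}.

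The approach I would take is the one used to establish Theorem 2: expand both sides in a common basis of single Gaussian polynomials and then show that every coefficient of the difference is a nonnegative polynomial in $q$. Concretely, the core difference equals
\[
\frac{[a+b]!_q}{[k]!_q\,[a-k]!_q\,[b-k]!_q}\left(\frac{1}{[a+1]_q\cdots[a+k]_q}-\frac{1}{[b+1]_q\cdots[b+k]_q}\right),
\]
and the first task is to guess, in the spirit of the $c_k(i)$ of Section 2, an explicit expansion of this difference as a positive combination of products of $q$-binomials, which one would then certify by a $q$-Vandermonde--Chu (Theorem 1) computation and, where possible, a Wilf--Zeilberger certificate as in Lemmas 4 and 5. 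Finding the correct positive expansion is the principal obstacle; the verification, once its shape is known, should be routine.

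A quick sanity check comes from telescoping the bracket factor by factor: replacing $[b+j]_q$ by $[a+j]_q$ one index at a time and using
\[
[b+j]_q-[a+j]_q=q^{a+j}\,[b-a]_q\ \geq\ 0
\]
exhibits the bracket as a sum of positive rational terms, which already forces positivity of the difference under evaluation at any real $q\in(0,1)$. However, the individual telescoped terms are only rational, not polynomial, so this argument does \emph{not} deliver coefficientwise nonnegativity, which is what the claim requires. For that I would instead seek a combinatorial injection generalizing the append-ones map of Lemma 7: interpreting the four Gaussian polynomials as area generating functions for partitions in boxes, one maps a pair $(\mu,\nu)$ with $\mu\subseteq k\times(b-k)$ and $\nu\subseteq(a-k)\times(b+k)$ to a pair $(\alpha,\beta)$ with $\alpha\subseteq k\times(a-k)$ and $\beta\subseteq(a+k)\times(b-k)$, preserving total area. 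Here $\mu$ embeds directly into the top of the taller box of $\beta$ (the widths agree at $b-k$), and the genuine difficulty—exactly as in Lemma 7—is to reshape $\nu$ so as to fill the remaining region while matching the area statistic.
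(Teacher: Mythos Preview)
The statement you are attempting is labeled a \emph{Conjecture} in the paper and is left open there; the authors give no proof, only Lemma~8 (a partial-fraction expansion of the bracketed difference) as a tool that ``might be helpful if one decides to engage this conjecture.'' There is therefore no paper proof to compare against.

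Your write-up is not a proof either, and you are candid about this. The equivalence of the two displayed forms is correct and your $q$-factorial computation checking it is fine. Your telescoping ``sanity check'' on the bracket, replacing $[b+j]_q$ by $[a+j]_q$ one index at a time, is essentially the content of the paper's Lemma~8, and as you rightly observe it yields only positivity for real $q\in(0,1)$, not coefficientwise nonnegativity. The two routes you then propose---guessing a positive expansion to be certified by $q$-Vandermonde--Chu and a WZ certificate, or building an area-preserving injection on pairs of partitions---are plausible programs, but in each case you stop precisely at the hard step (finding the expansion, constructing the reshaping of $\nu$) without carrying it out. In short: the paper leaves this open, your outline does not close it, and the decisive idea is still missing.
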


\noindent
The next elementary result might be helpful if one decides to engage this conjecture.

\begin{lemma} For $0\leq k\leq a<b$, we have
\begin{align*}
\frac1{\binom{a+k}k_q}-\frac1{\binom{b+k}k_q}
&=\sum_{i=1}^kq^{a+i}\frac{1-q^{b-a}}{1-q^{b+i}}\prod_{j=i}^k\frac{1-q^j}{1-q^{a+j}}\prod_{j=1}^{i-1}\frac{1-q^j}{1-q^{b+j}}.
\end{align*}
\end{lemma}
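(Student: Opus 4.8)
The plan is to recognize the right-hand side as a telescoping sum. First I would rewrite the two reciprocals as finite products: since $\binom{a+k}{k}_q=\prod_{j=1}^{k}\frac{1-q^{a+j}}{1-q^{j}}$, we have $\frac{1}{\binom{a+k}{k}_q}=\prod_{j=1}^{k}\frac{1-q^{j}}{1-q^{a+j}}$, and likewise $\frac{1}{\binom{b+k}{k}_q}=\prod_{j=1}^{k}\frac{1-q^{j}}{1-q^{b+j}}$. The idea is to interpolate between these two products one factor at a time, replacing the exponent $a$ by $b$ in the $j$-th factor as $j$ increases.

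Concretely, for $1\leq i\leq k+1$ I would set
$$S_i=\prod_{j=1}^{i-1}\frac{1-q^{j}}{1-q^{b+j}}\cdot\prod_{j=i}^{k}\frac{1-q^{j}}{1-q^{a+j}},$$
with the usual empty-product convention, so that $S_1=\frac{1}{\binom{a+k}{k}_q}$ (the first product is empty) and $S_{k+1}=\frac{1}{\binom{b+k}{k}_q}$ (the second product is empty). The left-hand side of the lemma is then exactly $S_1-S_{k+1}=\sum_{i=1}^{k}\left(S_i-S_{i+1}\right)$, and it remains to match each consecutive difference with the corresponding summand on the right.

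To evaluate $S_i-S_{i+1}$, I would note that these two expressions agree in every factor except the $i$-th, which is $\frac{1-q^{i}}{1-q^{a+i}}$ in $S_i$ and $\frac{1-q^{i}}{1-q^{b+i}}$ in $S_{i+1}$. Factoring out the common pieces yields
$$S_i-S_{i+1}=(1-q^{i})\left(\frac{1}{1-q^{a+i}}-\frac{1}{1-q^{b+i}}\right)\prod_{j=1}^{i-1}\frac{1-q^{j}}{1-q^{b+j}}\prod_{j=i+1}^{k}\frac{1-q^{j}}{1-q^{a+j}}.$$
The key elementary computation is $\frac{1}{1-q^{a+i}}-\frac{1}{1-q^{b+i}}=\frac{q^{a+i}(1-q^{b-a})}{(1-q^{a+i})(1-q^{b+i})}$. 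Substituting this and absorbing the leftover factor $\frac{1-q^{i}}{1-q^{a+i}}$ into $\prod_{j=i+1}^{k}\frac{1-q^{j}}{1-q^{a+j}}$ to extend it to $\prod_{j=i}^{k}\frac{1-q^{j}}{1-q^{a+j}}$, one obtains precisely $q^{a+i}\frac{1-q^{b-a}}{1-q^{b+i}}\prod_{j=i}^{k}\frac{1-q^{j}}{1-q^{a+j}}\prod_{j=1}^{i-1}\frac{1-q^{j}}{1-q^{b+j}}$, which is the $i$-th term of the claimed sum.

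I do not expect a genuine obstacle here; the whole statement is a clean telescoping identity. The only place demanding care is the index bookkeeping: verifying the empty-product conventions give the correct endpoints $S_1$ and $S_{k+1}$, and confirming that after extracting the factor $q^{a+i}(1-q^{b-a})/(1-q^{b+i})$ the surviving factors recombine into exactly $\prod_{j=i}^{k}\frac{1-q^{j}}{1-q^{a+j}}\prod_{j=1}^{i-1}\frac{1-q^{j}}{1-q^{b+j}}$, with no off-by-one error in the product ranges.
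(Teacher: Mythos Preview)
Your telescoping argument is correct: the interpolating products $S_i$ have the right endpoints, the single-factor difference gives exactly $q^{a+i}(1-q^{b-a})/((1-q^{a+i})(1-q^{b+i}))$, and the leftover $(1-q^i)/(1-q^{a+i})$ extends the $a$-product to start at $j=i$ as claimed. The paper's own proof is just the one-line remark ``This results from partial fractions,'' and your telescoping decomposition is precisely a clean implementation of that idea, so the approaches coincide.
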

\begin{proof} This results from partial fractions. \end{proof}

\begin{example}
\begin{align*}
\frac1{\binom{a+1}1_q}-\frac1{\binom{b+1}1_q}
&=\frac{q^{a+1}(1-q)(1-q^{b-a})}{(1-q^{a+1})(1-q^{b+1})} \\
\frac1{\binom{a+2}2_q}-\frac1{\binom{b+2}2_q}
&=\frac{q^{a+1}(1-q)(1-q^2)(1-q^{b-a})}{(1-q^{a+1})(1-q^{a+2})(1-q^{b+1})}+\frac{q^{a+2}(1-q)(1-q^2)(1-q^{b-a})}{(1-q^{a+2})(1-q^{b+1})(1-q^{b+2})}.
\end{align*}
\end{example}

\begin{example}
\begin{align*}
\frac1{\binom{a+3}3_q}-\frac1{\binom{b+3}3_q}
&=\frac{q^{a+1}(1-q)(1-q^2)(1-q^3)(1-q^{b-a})}{(1-q^{a+1})(1-q^{a+2})(1-q^{a+3})(1-q^{b+1})} \\
&+\frac{q^{a+2}(1-q)(1-q^2)(1-q^3)(1-q^{b-a})}{(1-q^{a+2})(1-q^{a+3})(1-q^{b+1})(1-q^{b+2})} \\
&+\frac{q^{a+3}(1-q)(1-q^2)(1-q^3)(1-q^{b-a})}{(1-q^{a+3})(1-q^{b+1})(1-q^{b+2})(1-q^{b+3})}. \\
\end{align*}
\end{example}

\noindent
\begin{remark}
As a side note, we recall that G. E. Andrews \cite{GA} expresses $\binom{n}k_q-\binom{n}{k-1}_q$ as the generating function for partitions with particular \emph{Frobenius symbols}, while L. M. Butler \cite{LB} does this with the help of the \emph{Kostka-Foulkes polynomials} to show non-negativity of the coefficients. We shall provide an alternative algebraic approach.
\end{remark}

\begin{lemma} For $0\leq 2k\leq n$, we have $\binom{n}k_q-\binom{n}{k-1}_q\geq0$.
\end{lemma}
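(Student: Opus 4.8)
The plan is to prove a one‑parameter strengthening that forces the $q$‑Pascal recursion to close up. For integers $n,k,c$ set
\[
D(n,k,c):=\binom{n}{k}_q-q^{c}\binom{n}{k-1}_q,
\]
with the usual convention $\binom{n}{j}_q=0$ for $j<0$. I will show that $D(n,k,c)\ge 0$ whenever $0\le 2k\le n$ and $0\le c\le n-2k+1$; the Lemma is exactly the instance $c=0$. A check on small cases shows the bound $c\le n-2k+1$ is sharp (already $\binom{5}{2}_q-q^{3}\binom{5}{1}_q$ acquires a negative coefficient), which is what singles out this range and explains why a naive induction on $\binom{n}{k}_q-\binom{n}{k-1}_q$ alone fails to close.

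The engine is the pair of $q$‑Pascal recurrences $\binom{n}{k}_q=\binom{n-1}{k-1}_q+q^{k}\binom{n-1}{k}_q$ and $\binom{n}{k}_q=\binom{n-1}{k}_q+q^{n-k}\binom{n-1}{k-1}_q$. Substituting each of these, together with its shift $k\mapsto k-1$, into $D$ and collecting terms yields the two exact identities
\[
D(n,k,c)=q^{k}\,D(n-1,k,c-1)+D(n-1,k-1,c)
\]
and
\[
D(n,k,c)=D(n-1,k,c)+q^{n-k}\,D(n-1,k-1,c+1).
\]
Both are purely algebraic consequences of the recurrences, and I would display only the final line of each verification rather than the routine cancellation.

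I then induct on $n$, the base being $k=0$, where $D(n,0,c)=1\ge 0$. For $0\le c\le n-2k$ I apply the second identity, and for the top value $c=n-2k+1$ I apply the first. The payoff of the strengthening is that in each branch the two descendants fall back inside the admissible region: a short inequality check confirms that $(n-1,k,c)$ and $(n-1,k-1,c+1)$, respectively $(n-1,k,c-1)$ and $(n-1,k-1,c)$, all satisfy $0\le 2k'\le n-1$ together with $0\le c'\le (n-1)-2k'+1$, so the inductive hypothesis applies and every coefficient in sight is manifestly non‑negative. The lone exception is the descendant $D(n-1,k,\cdot)$ when $2k=n$, which formally leaves the range; but then $(n-1)-k=k-1$, so the symmetry $\binom{n-1}{k}_q=\binom{n-1}{k-1}_q$ collapses that descendant to $(1-q^{c'})\binom{n-1}{k-1}_q$ with $c'=0$, hence it simply vanishes. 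The main obstacle is precisely this design step: identifying the correct auxiliary parameter $c$ and its sharp range so that both recursive offspring remain admissible, and recognizing that the single boundary escape at $2k=n$ is neutralized by the symmetry of Gaussian polynomials rather than by the induction hypothesis.
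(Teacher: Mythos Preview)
Your argument is correct. The two recurrences for $D(n,k,c)$ follow directly from the two $q$-Pascal rules, and the case split $0\le c\le n-2k$ versus $c=n-2k+1$ is exactly what is needed so that each descendant $(n-1,k',c')$ lands back in the admissible window $0\le 2k'\le n-1$, $0\le c'\le (n-1)-2k'+1$. The only escape, at $2k=n$, is handled cleanly: in both branches the offending term is $D(n-1,k,0)=\binom{n-1}{k}_q-\binom{n-1}{k-1}_q$, which vanishes by the symmetry $\binom{n-1}{k}_q=\binom{n-1}{(n-1)-k}_q=\binom{n-1}{k-1}_q$. One stylistic remark: ``induct on $n$, the base being $k=0$'' reads oddly; what you mean is induction on $n$, with the case $k=0$ disposed of directly at every $n$.

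Your route is genuinely different from the paper's. The paper writes $n=\alpha k+d$ with $0\le d<k$ and factors
\[
\binom{n}{k}_q-\binom{n}{k-1}_q
= q^{k}\binom{n}{k-1}_q\,\frac{1-q^{(\alpha-2)k}}{1-q^{k}}
+ q^{(\alpha-1)k}\binom{n}{k-1}_q\,\frac{1-q^{d+1}}{1-q^{k}},
\]
then invokes the (cited, non-trivial) \emph{unimodality} of $\binom{n}{k-1}_q$ to show the second summand has non-negative coefficients. So the paper's proof is short but not self-contained: it imports unimodality of Gaussian polynomials as a black box. Your proof, by contrast, is entirely elementary---only the $q$-Pascal recurrences and the symmetry $\binom{m}{j}_q=\binom{m}{m-j}_q$ are used---and it yields the strictly stronger statement $\binom{n}{k}_q\ge q^{c}\binom{n}{k-1}_q$ for all $0\le c\le n-2k+1$. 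The trade-off is that the paper's decomposition gives an explicit closed-form expression for the difference, whereas your inductive scheme does not.
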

\begin{proof} Let $n=\alpha k+d$ where $0\leq d<k$. Rewrite
\begin{align*}
\binom{n}k_q-\binom{n}{k-1}_q
&=q^k\binom{n}{k-1}_q\frac{1-q^{(\alpha-2)k}}{1-q^k}
+q^{(\alpha-1)k}\binom{n}{k-1}_q\frac{1-q^{d+1}}{1-q^k}.
\end{align*}
Observe $\frac{1-q^{(\alpha-2)k}}{1-q^k}$ is already a polynomial with non-negative coefficients. Furthermore, since $U(q):=\binom{n}{k-1}_q$ is unimodal \cite{A}, \cite{HJ}, \cite{S}, 
the coefficient of $q^j$ in $U(q)\cdot(1-q^{d+1})$ is non-negative as long as $2j\leq\deg(U)$. The same is true for $U(q)\frac{1-q^{d+1}}{1-q^k}$ as a formal power series. 
Since the \it polynomial \rm $U(q)\frac{1-q^{d+1}}{1-q^k}$ is \it symmetric, \rm having degree no greater than $U(q)$,
all remaining coefficients of $U(q)\frac{1-q^{d+1}}{1-q^k}$ are non-negative. \end{proof}

\section*{Acknowledgement}

\noindent
The first author thanks R. P. Stanley for bringing Bergeron's conjecture to his attention, also F. Bergeron for his generous explanation of the problem studied in the present work.

\medskip

\end{document}